\newtheorem{theorem}{Theorem}[section]
\newtheorem{lemma}[theorem]{Lemma}
\newtheorem{proposition}[theorem]{Proposition}
\newtheorem{corollary}[theorem]{Corollary}
\newtheorem{remark}[theorem]{Remark}
\begin{document}
\title{On the lower bound for kissing numbers of $\ell_p$-spheres in high dimensions}

\author{Chengfei Xie\thanks{C. Xie is with the School of Mathematical Sciences, Capital Normal University, Beijing 100048, China (email: cfxie@cnu.edu.cn).}
~and Gennian Ge\thanks{Corresponding author. G. Ge is with the School of Mathematical Sciences, Capital Normal University, Beijing 100048, China (e-mail: gnge@zju.edu.cn). The research of G. Ge is supported by the National Key Research and Development Program of China under Grant Nos. 2020YFA0712100  and  2018YFA0704703, National Natural Science Foundation of China under Grant No. 11971325, and Beijing Scholars Program.}}

\maketitle

\begin{abstract}
In this paper, we give some new lower bounds for the kissing number of $\ell_p$-spheres. These results improve the previous work due to Xu (2007). Our method is based on coding theory.
\smallskip
\end{abstract}
\medskip

\noindent {{\it Keywords\/}: kissing number, Gilbert-Varshamov type bound, $\ell_p$-sphere}

\smallskip

\noindent {{\it AMS subject classifications\/}: 52C17, 05B40, 11H71, 05D05}

\section{Introduction}
Let $S^{n-1}$ be the unit sphere in $\mathbb{R}^n$. The (translative) kissing number problem asks the maximum number of nonoverlapping translates $S^{n-1}+\bm{x}$ that can touch $S^{n-1}$ at its boundary. This is an old and difficult problem in discrete geometry. The exact answer is only known in dimensions $1, 2, 3, 4, 8$, and $24$. In dimensions $1$ and $2$, the problem is trivial; in dimension $3$, the problem is known as the Gregory-Newton Problem and was solved by Sch{\"u}tte and van der Waerden \cite{schutte1952problem} (see also \cite{MR76369} for another proof); in dimension $4$, the problem was solved by Musin \cite{MR2415397} via an extension of Delsarte's method; in dimensions $8$ and $24$, the problem was solved by Leven\v{s}te\u{\i}n \cite{levenshtein1979bounds} and Odlyzko and Sloane \cite{MR530296} independently.

Let $K_2(n)$ be the kissing number of $S^{n-1}$. The best upper bound for $K_2(n)$ in high dimensions is due to Kabatjanski\u{\i} and Leven\v{s}te\u{\i}n \cite{MR0514023}: $K_2(n)\leq2^{0.401n(1+o(1))}$. Using a sphere covering argument, Shannon \cite{MR103137} and Wyner \cite{MR180417} obtained a lower bound $K_2(n)\geq c\sqrt n(2/\sqrt3)^n$. Recently, Jenssen et al$.$ \cite{MR3836667} improved the lower bound by a linear factor in the dimension. See also {Fern{\'a}ndez} et al$.$ \cite{2021arXiv211101255G} for constant factor improvement.

In this paper, we consider the kissing number of $\ell_p$-spheres. For $p\geq1$, let $S_{p}^{n-1}(R)$ be the $\ell_p$-sphere with radius $R$ and centered at $\bm{0}$ in $\mathbb{R}^n$, that is, $S_{p}^{n-1}(R):=\left\{\bm{x}\in\mathbb{R}^n: \|\bm{x}\|_p=R\right\}$,
where the $\ell_p$-norm $\|\cdot\|_p$ is defined by $\|\bm{x}\|_p=\left(\sum_{i=1}^n|x_i|^p\right)^{1/p}$ for $\bm{x}=(x_1, x_2, \ldots, x_n)$. We simply write $S_p^{n-1}=S_{p}^{n-1}(1)$. Let $K_p(n)$ be the kissing number of $S_p^{n-1}$. Minkowski-Hadwiger theorem \cite{MR91490} implies an upper bound $K_p(n)\leq3^n-1$. This bound was improved by Sah et al$.$ \cite{MR4064778} for $p\geq2$. Much less is known about the upper bound when $p$ is between $1$ and $2$.

On the lower bound, Larman and Zong \cite{MR1668102} proved that $K_p(n)\geq(9/8)^{n(1+o(1))}=2^{0.1699n(1+o(1))}$. Xu \cite{MR2301531} improved this result for every $p\geq1$, for instance, $K_3(n)\geq2^{0.4564n(1+o(1))}$. Our main result is an improvement to the work of Xu. Since our result does not have an explicit formula, we list some numerical results here:
$$
K_1(n)\geq2^{0.1247n(1+o(1))}+2^{0.1825n(1+o(1))}+2^{0.1554n(1+o(1))}+\cdots;
$$
$$
K_2(n)\geq2^{0.2059n(1+o(1))}+2^{0.1381n(1+o(1))}+2^{0.0584n(1+o(1))}+\cdots;
$$
$$
K_3(n)\geq cn2^{0.4564n(1+o(1))}+2^{0.1562n(1+o(1))}+2^{0.0425n(1+o(1))}+\cdots.
$$

We give some explanation to our results. In the lower bound for $K_2(n)$, the $2^{0.2059n(1+o(1))}$ term is the same as the lower bound due to Xu, so we improve the lower bound by adding the remainder terms $2^{0.1381n(1+o(1))}+2^{0.0584n(1+o(1))}+\cdots$. In the lower bound for $K_3(n)$, the $2^{0.4564n(1+o(1))}$ term is the same as the lower bound due to Xu, so we improve the leading term by a factor of $n$ and add some remainder terms.

Our idea comes from coding theory. The translative kissing number $K_p(n)$ is equal to the largest size of an \textit{$\ell_p$-spherical code} with minimum distance $1$ (see Lemma \ref{qiumianma}). We choose a discrete set $X$ from $S_p^{n-1}$. Applying ideas from coding theory, we are able to find a large subset of $X$, in which points have pairwise distance larger than or equal to $1$. This gives a lower bound for  $K_p(n)$.

\section{An improved Gilbert-Varshamov type bound}
Let $A_p(n, d)$ be the maximum size of a subset of  $S_p^{n-1}$ in which the points have pairwise $\ell_p$-distance at least $2d$; that is,
$$
A_p(n, d):=\max\{|C|:C\subseteq S_p^{n-1}\text{ and }d_p(\bm{x}, \bm{y})\geq2d, \forall\bm{x}, \bm{y}\in C\},
$$
where $d_p(\bm{x}, \bm{y}):=\|\bm{x}-\bm{y}\|_p$ is the $\ell_p$-distance between $\bm{x}$ and $\bm{y}$. In other words, $A_p(n, d)$ is the largest size of an $\ell_p$-spherical code with minimum distance $2d$. The following lemma is an easy observation.
\begin{lemma}\label{qiumianma}
The translative kissing number $K_p(n)$ of  $S_p^{n-1}$ is equal to $A_p(n, 1/2)$.
\end{lemma}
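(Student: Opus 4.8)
The plan is to prove $K_p(n) = A_p(n, 1/2)$ by establishing a bijection between valid kissing configurations of $S_p^{n-1}$ and $\ell_p$-spherical codes of minimum distance $1$. First I would set up the elementary geometry: suppose $S_p^{n-1} + \bm{x}_1, \ldots, S_p^{n-1} + \bm{x}_m$ are translates of the unit $\ell_p$-sphere, each touching $S_p^{n-1}$ and no two of them overlapping (i.e. the open unit $\ell_p$-balls are disjoint, though the spheres themselves may touch). The key observations are: (i) a translate $S_p^{n-1} + \bm{x}$ touches $S_p^{n-1}$ at its boundary if and only if $\|\bm{x}\|_p = 2$; and (ii) two translates $S_p^{n-1} + \bm{x}_i$ and $S_p^{n-1} + \bm{x}_j$ are nonoverlapping if and only if $\|\bm{x}_i - \bm{x}_j\|_p \geq 2$.

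For the first observation, the unit $\ell_p$-ball has "radius" $1$ in the $\ell_p$-metric, so two unit $\ell_p$-balls centered at $\bm{0}$ and $\bm{x}$ are externally tangent exactly when the distance between centers equals $2$; since $\|\cdot\|_p$ is a genuine norm for $p \geq 1$, this is precisely $\|\bm{x}\|_p = 2$, and then their boundary spheres share exactly the point $\bm{x}/2$. The second observation follows the same way: the interiors of the two unit $\ell_p$-balls $\{\bm{y} : \|\bm{y} - \bm{x}_i\|_p < 1\}$ and $\{\bm{y} : \|\bm{y} - \bm{x}_j\|_p < 1\}$ are disjoint iff $\|\bm{x}_i - \bm{x}_j\|_p \geq 2$, which is an immediate consequence of the triangle inequality for $\|\cdot\|_p$. (One should be a little careful about what "nonoverlapping" means — typically it means disjoint interiors — but the argument is the same.)

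Given these two facts, the correspondence is immediate. Set $\bm{c}_i := \bm{x}_i / 2$ for each $i$. Then $\|\bm{c}_i\|_p = 1$, so $\bm{c}_i \in S_p^{n-1}$, and $d_p(\bm{c}_i, \bm{c}_j) = \|\bm{x}_i - \bm{x}_j\|_p / 2 \geq 1 = 2 \cdot (1/2)$ for $i \neq j$. Hence $\{\bm{c}_1, \ldots, \bm{c}_m\}$ is an $\ell_p$-spherical code with minimum distance at least $2 \cdot (1/2)$, so $m \leq A_p(n, 1/2)$, which gives $K_p(n) \leq A_p(n, 1/2)$. Conversely, given a code $C \subseteq S_p^{n-1}$ with pairwise distances at least $1$ and $|C| = A_p(n, 1/2)$, the translates $\{S_p^{n-1} + 2\bm{c} : \bm{c} \in C\}$ all touch $S_p^{n-1}$ and are pairwise nonoverlapping, so $K_p(n) \geq A_p(n, 1/2)$. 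Combining the two inequalities yields the claim.

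I do not expect any serious obstacle here — this is the "easy observation" the authors advertise. The only point requiring a modicum of care is the precise definition of "nonoverlapping"/"touch at the boundary" and checking that the tangency point $\bm{x}/2$ lies on both spheres; once those conventions are pinned down, everything reduces to the triangle inequality for the $\ell_p$-norm (valid since $p \geq 1$) together with the scaling relation $d_p(\bm{x}/2, \bm{y}/2) = \tfrac12 d_p(\bm{x}, \bm{y})$.
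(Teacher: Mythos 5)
Your proposal is correct and follows essentially the same argument as the paper: scale the kissing centers by $1/2$ to obtain a spherical code, and conversely scale a code by $2$ to obtain a kissing configuration, with both directions resting on the observation that tangency forces $\|\bm{x}_i\|_p=2$ and nonoverlap forces $\|\bm{x}_i-\bm{x}_j\|_p\geq 2$. The only difference is that you spell out the role of the triangle inequality a bit more explicitly, which the paper leaves implicit.
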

\begin{proof}
For convenience, let $k_1=K_p(n)$ and $k_2=A_p(n, 1/2)$.

Suppose $S_p^{n-1}, S_p^{n-1}+\bm{x}_1, S_p^{n-1}+\bm{x}_2, \ldots, S_p^{n-1}+\bm{x}_{k_1}$ form a kissing configuration. For every $i$, if $d_p(\bm{0}, \bm{x}_i)>2$, then $S_p^{n-1}+\bm{x}_i$ and   $S_p^{n-1}$ do not share a common point; if $d_p(\bm{0}, \bm{x}_i)<2$, then $S_p^{n-1}+\bm{x}_i$ and   $S_p^{n-1}$ are overlapping. Thus, $d_p(\bm{0}, \bm{x}_i)=2$ and $\frac{1}{2}\bm{x}_i\in S_p^{n-1}$ for every $i$. Moreover, $d_p(\bm{x}_i, \bm{x}_j)\geq2$ for $i\neq j$. So $d_p(\frac{1}{2}\bm{x}_i, \frac{1}{2}\bm{x}_j)\geq1$ for $i\neq j$. Therefore, $\{\frac{1}{2}\bm{x}_1, \frac{1}{2}\bm{x}_2, \ldots, \frac{1}{2}\bm{x}_{k_1}\}$ is an $\ell_p$-spherical code with minimum distance $1$, i.e. $k_2\geq k_1$.

On the other hand, suppose $\{\bm{x}_1, \bm{x}_2, \ldots, \bm{x}_{k_2}\}$ is an $\ell_p$-spherical code with minimum distance $1$. Then $S_p^{n-1}+2\bm{x}_1, S_p^{n-1}+2\bm{x}_2, \ldots, S_p^{n-1}+2\bm{x}_{k_2}$ are nonoverlapping, and $S_p^{n-1}+2\bm{x}_i$ touches $S_p^{n-1}$ at $\bm{x}_i$ for every $i$. So $k_1\geq k_2$. Thus the lemma follows.
\end{proof}

For a positive integer $m\leq n$, which will be determined later, we define a family $\mathcal{J}(m, n)$ of subsets of $\mathbb{R}^n$ recursively. Define $m_1:=m$ and
$$
J_1(m, n):=\left\{\bm{u}=(u_1, u_2, \ldots, u_n)\in\{0, \pm1\}^n:\sum_{i=1}^n|u_i|^p=m\right\}.
$$
Suppose we have defined $m_i$ and $J_i(m, n)$. Then we define
\begin{equation}\label{midaxiao}
m_{i+1}:=\left\lfloor m_i/2^p\right\rfloor
\end{equation}
and
$$
J_{i+1}(m, n):=\left\{\bm{u}=(u_1, u_2, \ldots, u_n)\in\{0, \pm(m/m_{i+1})^{1/p}\}^n:\sum_{i=1}^n|u_i|^p=m\right\}.
$$
This process terminates when $m_r<2^p$ for some $r$. So we obtain $\{m_1>m_2>\ldots>m_r\}$ and $\mathcal{J}(m, n)=\{J_1(m, n), J_2(m, n), \ldots, J_r(m, n)\}$. And we have the following proposition.

\begin{proposition}
For $\mathcal{J}(m, n)$ defined above, the following statements hold.
\begin{enumerate}
  \item If $i\neq j$, then $J_i(m, n)\cap J_j(m, n)=\emptyset$.
  \item For every $1\leq i\leq r$ and for every $\bm{u}\in J_i(m, n)$, $\bm{u}$ has exactly $n-m_i$ zero coordinates.
  \item For every $1\leq i\leq r$,
\begin{equation}\label{jidaxiao}
|J_i(m, n)|={n \choose m_i}2^{m_i}.
\end{equation}
  \item For every $1\leq i\leq r$ and for every $\bm{u}\in J_i(m, n)$, the $\ell_p$-norm of $\bm{u}$ is $m^{1/p}$.
  \item If $i\neq j$, then for every $\bm{u}\in J_i(m, n)$ and $\bm{v}\in J_j(m, n)$, $d_p(\bm{u}, \bm{v})\geq m^{1/p}$.
\end{enumerate}
\end{proposition}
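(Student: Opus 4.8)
The plan is to settle the ``local'' statements (2), (3), (4) by reading off the definitions, to deduce (1) from (2), and then to put the real effort into the distance bound (5). For (2): a nonzero coordinate of $\bm{u}\in J_i(m,n)$ has absolute value $(m/m_i)^{1/p}$, where for $i=1$ this reads $(m/m_1)^{1/p}=1$ since $m_1=m$; thus $|u_k|^p=m/m_i$ for each nonzero entry, and if $\bm{u}$ has $t$ nonzero entries the constraint $\sum_k|u_k|^p=m$ forces $t\,(m/m_i)=m$, i.e.\ $t=m_i$, so $\bm{u}$ has $n-m_i$ zero coordinates. Statement (4) is then immediate from $\|\bm{u}\|_p^p=\sum_k|u_k|^p=m$. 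For (3), by (2) one selects the $m_i$ nonzero positions in $\binom{n}{m_i}$ ways and a sign for each, giving $|J_i(m,n)|=\binom{n}{m_i}2^{m_i}$. For (1), the values $m_1>m_2>\cdots>m_r$ are pairwise distinct (indeed $m_{k+1}=\lfloor m_k/2^p\rfloor\le m_k/2<m_k$ since $p\ge1$), so by (2) a vector in $J_i(m,n)$ and a vector in $J_j(m,n)$ with $i\ne j$ have different support sizes and cannot coincide.

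The substance is (5). Assume $i<j$ without loss of generality and set $a:=(m/m_i)^{1/p}$ and $b:=(m/m_j)^{1/p}$, so the nonzero entries of $\bm{u}\in J_i(m,n)$ are $\pm a$ and those of $\bm{v}\in J_j(m,n)$ are $\pm b$. The crucial input is the recursion~(\ref{midaxiao}): since $m_{i+1}=\lfloor m_i/2^p\rfloor\le m_i/2^p$ and $m_{i+1}\ge m_j$ (because $i+1\le j$ and the sequence $m_1>m_2>\cdots$ is decreasing), we get $m_i\ge 2^p m_j$, hence $b^p/a^p=m_i/m_j\ge 2^p$, so $b\ge 2a$ and therefore $b-a\ge a$, giving $(b-a)^p\ge a^p$. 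I would then bound $d_p(\bm{u},\bm{v})^p=\sum_{k=1}^n|u_k-v_k|^p$ from below by keeping only the $m_i$ coordinates in the support of $\bm{u}$: on such a coordinate, either $v_k=0$, so $|u_k-v_k|^p=a^p$, or $v_k=\pm b$, so $|u_k-v_k|\in\{b-a,\,b+a\}$ and $|u_k-v_k|^p\ge(b-a)^p\ge a^p$. Summing over $\mathrm{supp}(\bm{u})$ gives $d_p(\bm{u},\bm{v})^p\ge m_i\,a^p=m$, hence $d_p(\bm{u},\bm{v})\ge m^{1/p}$.

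The only real work is in (5), and even there I do not foresee a serious obstacle once the inequality $b\ge 2a$ is in hand. The points that still need a little care are the base-case convention $(m/m_1)^{1/p}=1$ and the check that every $m_i\ge 1$, so that supports have the stated sizes and the coordinate values are well defined (this holds because the process halts precisely when $m_r<2^p$, while any $m_k\ge 2^p$ yields $m_{k+1}\ge 1$). It seems worth stressing in the write-up that $b\ge 2a$ comes straight from the floor in~(\ref{midaxiao}), and that this is the reason the construction takes $m_{i+1}=\lfloor m_i/2^p\rfloor$ rather than a larger value.
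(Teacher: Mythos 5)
Your proof is correct and follows essentially the same route as the paper's: you restrict the sum to the support of $\bm{u}$, show each coordinate there contributes at least $m/m_i$ using the floor in the recursion to get $b\ge 2a$ (equivalently $m_i/m_j\ge 2^p$), and sum to get $m$. The paper declares (1)--(4) trivial, so your explicit derivations there are just spelled-out versions of the same observations.
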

\begin{proof}
The first four statements are trivial.

In order to prove the last statement, let $\bm{u}=(u_1, u_2, \ldots, u_n)\in J_i(m, n)$ and $\bm{v}=(v_1, v_2, \ldots, v_n)\in J_j(m, n)$, where $1\leq i<j\leq r$. Without loss of generality, assume that $u_1=u_2=\cdots=u_{m_i}=(m/m_{i})^{1/p}$ and $u_{m_i+1}=u_{m_i+2}=\cdots=u_{n}=0$. In other words, $\bm{u}=(m/m_{i})^{1/p}\cdot1^{m_i}0^{n-m_i}$. For $1\leq k\leq m_i$, we have $v_k\in \{0, \pm(m/m_{j})^{1/p}\}$, and
\begin{equation*}
\begin{split}
|u_k-v_k|^p&\geq\min\left\{|(m/m_{i})^{1/p}-0|^p, |(m/m_{i})^{1/p}-(m/m_{j})^{1/p}|^p, |(m/m_{i})^{1/p}+(m/m_{j})^{1/p}|^p\right\}\\
&=\min\left\{|(m/m_{i})^{1/p}-0|^p, |(m/m_{i})^{1/p}-(m/m_{j})^{1/p}|^p\right\}\\
&=\min\left\{\frac{m}{m_{i}}, \frac{m}{m_{i}}\cdot|1-(m_i/m_{j})^{1/p}|^p\right\}\\
&=\frac{m}{m_{i}}\min\left\{1, |1-(m_i/m_{j})^{1/p}|^p\right\}.
\end{split}
\end{equation*}
Since $j>i$, it follows that $m_j\leq m_{i+1}=\lfloor\frac{m_i}{2^p}\rfloor\leq\frac{m_i}{2^p}$. Thus $m_i/m_j\geq2^p$, and
$$
|u_k-v_k|^p\geq\frac{m}{m_{i}}\min\left\{1, |1-(m_i/m_{j})^{1/p}|^p\right\}\geq\frac{m}{m_{i}}\min\left\{1, |1-(2^p)^{1/p}|^p\right\}=\frac{m}{m_{i}}.
$$
Therefore,
$$
d_p(\bm{u}, \bm{v})^p=\sum_{k=1}^n|u_k-v_k|^p\geq\sum_{k=1}^{m_i}|u_k-v_k|^p\geq\sum_{k=1}^{m_i}\frac{m}{m_{i}}=m.
$$
This completes the proof.
\end{proof}
For every $i$, let $J'_i(m, n)$ be a largest subset of $J_i(m, n)$ with the property that $d_p(\bm{u}, \bm{v})\geq m^{1/p}$ for every $\bm{u}, \bm{v}\in J'_i(m, n)$. Since we have proved that $d_p(\bm{u}, \bm{v})\geq m^{1/p}$ if $\bm{u}\in J'_i(m, n)\subseteq J_i(m, n)$ and $\bm{v}\in J'_j(m, n)\subseteq J_j(m, n)$ for $i\neq j$,  the set
$$
\frac{1}{m^{1/p}}\bigcup_{i=1}^r J'_i(m, n):=\left\{\bm{x}\in\mathbb{R}^n:m^{1/p}\bm{x}\in\bigcup_{i=1}^r J'_i(m, n)\right\}
$$
is an $\ell_p$-spherical code with minimum distance $1$. So
\begin{equation}\label{apji}
A_p(n, 1/2)\geq\left|\frac{1}{m^{1/p}}\bigcup_{i=1}^r J'_i(m, n)\right|=\left|\bigcup_{i=1}^r J'_i(m, n)\right|=\sum_{i=1}^r\left|J'_i(m, n)\right|.
\end{equation}

For $1\leq i\leq r$ and $\bm{u}\in J_i(m, n)$, define
$$
B_{i, n}(\bm{u}, m):=\left\{\bm{v}\in J_i(m, n): d_p(\bm{u}, \bm{v})<m^{1/p}\right\},
$$
which is the open $\ell_p$-ball centered at $\bm{u}$ with radius $m^{1/p}$ in the metric space $(J_i(m, n), \|\cdot\|_p)$. Note that the size of $B_{i, n}(\bm{u}, m)$ is independent of $\bm{u}$. If we write $B_{i, n}(m)$ for the size of $B_{i, n}(\bm{u}, m)$, then
\begin{equation}\label{bidaxiao}
B_{i, n}(m)=\sum_{2t+2^px<m_i}{m_i\choose t}{n-m_i\choose t}{m_i-t\choose x}2^t.
\end{equation}
Using the above notations, we have the following theorem, which is a Gilbert-Varshamov type bound for $\left|J'_i(m, n)\right|$.
\begin{theorem}\label{gvjie}
For every $1\leq i\leq r$, we have
\begin{equation}\label{jixiajie}
\left|J'_i(m, n)\right|\geq\left\lceil\frac{\left|J_i(m, n)\right|}{B_{i, n}(m)}\right\rceil=\left\lceil\frac{{n \choose m_i}2^{m_i}}{B_{i, n}(m)}\right\rceil.
\end{equation}
\end{theorem}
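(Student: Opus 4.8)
The plan is to run the standard sphere-covering (Gilbert--Varshamov) argument inside the finite metric space $(J_i(m,n),\|\cdot\|_p)$. Fix $i$ and, for brevity, write $J=J_i(m,n)$, $J'=J'_i(m,n)$, and $B(\bm{u})=B_{i,n}(\bm{u},m)$ for the open $\ell_p$-ball of radius $m^{1/p}$ about $\bm{u}$ inside $J$. The point to exploit is that $J'$, being a \emph{largest} subset of $J$ in which all pairwise $\ell_p$-distances are at least $m^{1/p}$, is in particular \emph{maximal} with respect to this property: no further point of $J$ can be adjoined to $J'$ without destroying it.

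First I would convert maximality into a covering statement. Let $\bm{u}\in J$ be arbitrary. If $\bm{u}\in J'$, then $\bm{u}\in B(\bm{u})$, since $d_p(\bm{u},\bm{u})=0<m^{1/p}$ (recall $m\ge 1$). If $\bm{u}\notin J'$, then $J'\cup\{\bm{u}\}$ violates the distance condition, so there exists $\bm{v}\in J'$ with $d_p(\bm{u},\bm{v})<m^{1/p}$, that is, $\bm{u}\in B(\bm{v})$. In either case $\bm{u}$ belongs to $B(\bm{v})$ for some $\bm{v}\in J'$, and therefore
$$
J=\bigcup_{\bm{v}\in J'}B(\bm{v}).
$$

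Next I would pass to cardinalities. Since the size of $B(\bm{v})$ does not depend on the center (as noted just before the statement) and equals $B_{i,n}(m)$, the union bound yields
$$
|J_i(m,n)|\;=\;\Big|\bigcup_{\bm{v}\in J'}B(\bm{v})\Big|\;\le\;\sum_{\bm{v}\in J'}|B(\bm{v})|\;=\;|J'_i(m,n)|\cdot B_{i,n}(m).
$$
Dividing by $B_{i,n}(m)>0$ gives $|J'_i(m,n)|\ge |J_i(m,n)|/B_{i,n}(m)$; since the left-hand side is a nonnegative integer, we may round the right-hand side up to obtain $|J'_i(m,n)|\ge\lceil |J_i(m,n)|/B_{i,n}(m)\rceil$. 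Substituting $|J_i(m,n)|={n\choose m_i}2^{m_i}$ from \eqref{jidaxiao} then gives the displayed equality in \eqref{jixiajie}, completing the proof.

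There is no real obstacle here; the argument is a textbook greedy/packing bound. The only small points demanding care are that a maximum-size code is automatically maximal, so the balls centered at its codewords cover all of $J_i(m,n)$; that the balls $B(\bm{u})$ are defined by a \emph{strict} inequality, hence each contains its own center; and that $B_{i,n}(m)$ is independent of the center, so the crude union bound collapses to a clean product. The explicit combinatorial evaluation \eqref{bidaxiao} of $B_{i,n}(m)$---counting, for a fixed $\bm{u}\in J_i(m,n)$, the $\bm{v}\in J_i(m,n)$ within distance $m^{1/p}$ by selecting which coordinates have their signs flipped, which become zero, and which were zero and become nonzero---has already been recorded above and may simply be invoked.
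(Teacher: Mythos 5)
Your argument is correct, and it is the classical ``Gilbert'' form of the bound: since $J'_i(m,n)$ is a \emph{maximum}-size code it is in particular \emph{maximal}, so the open balls $B_{i,n}(\bm{v},m)$ for $\bm{v}\in J'_i(m,n)$ cover all of $J_i(m,n)$; a union bound and the fact that ball sizes are center-independent then give $|J_i(m,n)|\le |J'_i(m,n)|\cdot B_{i,n}(m)$, and integrality lets you round up. The paper instead runs the ``Varshamov'' greedy construction: set $J=\lceil |J_i(m,n)|/B_{i,n}(m)\rceil$, pick codewords $\bm{u}_1,\bm{u}_2,\dots$ one at a time, and observe that after $k<J$ picks the set $J_i(m,n)\setminus\bigcup_{j\le k}B_{i,n}(\bm{u}_j,m)$ still has size at least $|J_i(m,n)|-kB_{i,n}(m)>0$, so a $(k+1)$-st codeword exists; this produces a code of size $J$ directly, without invoking the maximality of $J'_i(m,n)$. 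Both are standard and yield exactly the same numerical bound. Your covering argument is marginally shorter and does not need to track the greedy process, while the paper's construction is explicit and avoids the (small but real) step of arguing that a largest code must be maximal; for the purposes of Theorem \ref{gvjie} and its use in Corollary \ref{gaijin} the two are interchangeable.
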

The following corollary is immediate and it is our main result.
\begin{corollary}\label{gaijin}
\begin{equation}\label{apxiajie}
A_p(n, 1/2)\geq\max_{1\leq m\leq n}\sum_{i=1}^r\left\lceil\frac{{n \choose m_i}2^{m_i}}{B_{i, n}(m)}\right\rceil.
\end{equation}
\end{corollary}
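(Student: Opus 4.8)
The plan is to obtain Corollary \ref{gaijin} as a one-line consequence of Theorem \ref{gvjie} together with the inequality \eqref{apji} already established above, so that the real work is the proof of the Gilbert--Varshamov type bound of Theorem \ref{gvjie}. I therefore first describe how I would prove that theorem and then assemble the corollary.

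For a fixed $i$, I would run the standard maximality (greedy) argument. By definition $J'_i(m,n)$ is a subset of $J_i(m,n)$ of maximum possible cardinality subject to $d_p(\bm u,\bm v)\ge m^{1/p}$ for all of its points; such a set exists since $J_i(m,n)$ is finite, and maximality of its cardinality forces maximality under inclusion. Hence every $\bm w\in J_i(m,n)$ not already in $J'_i(m,n)$ cannot be adjoined without violating the distance condition, so $d_p(\bm w,\bm u)<m^{1/p}$ for some $\bm u\in J'_i(m,n)$, i.e. $\bm w\in B_{i,n}(\bm u,m)$; moreover each $\bm u\in J'_i(m,n)$ lies in $B_{i,n}(\bm u,m)$, since $d_p(\bm u,\bm u)=0<m^{1/p}$ (here $m\ge1$). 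Thus the balls $B_{i,n}(\bm u,m)$ for $\bm u\in J'_i(m,n)$ cover $J_i(m,n)$, and since each has the same cardinality $B_{i,n}(m)$ (as recorded just before the theorem) we get
$${n \choose m_i}2^{m_i}=\left|J_i(m,n)\right|\le\sum_{\bm u\in J'_i(m,n)}\left|B_{i,n}(\bm u,m)\right|=\left|J'_i(m,n)\right|\cdot B_{i,n}(m),$$
using \eqref{jidaxiao}. Dividing gives $\left|J'_i(m,n)\right|\ge{n \choose m_i}2^{m_i}/B_{i,n}(m)$, and since $\left|J'_i(m,n)\right|$ is a nonnegative integer the right-hand side may be rounded up, which is \eqref{jixiajie}.

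Corollary \ref{gaijin} then follows at once: for every integer $m$ with $1\le m\le n$, inequality \eqref{apji} reads $A_p(n,1/2)\ge\sum_{i=1}^r\left|J'_i(m,n)\right|$, so plugging in the bound from Theorem \ref{gvjie} for each summand yields $A_p(n,1/2)\ge\sum_{i=1}^r\lceil{n \choose m_i}2^{m_i}/B_{i,n}(m)\rceil$; since $m$ was an arbitrary integer in this range, taking the maximum over $m$ gives \eqref{apxiajie}.

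I expect no real obstacle in this last step; the substantive points sit upstream. The two that matter are statement (5) of the Proposition, which is exactly what licenses treating the layers $J_i(m,n)$ separately so that $\bigcup_{i=1}^r J'_i(m,n)$ still has minimum $\ell_p$-distance $m^{1/p}$, and the explicit count \eqref{bidaxiao} for $B_{i,n}(m)$, which one verifies by classifying the pairs of points of $J_i(m,n)$ at $\ell_p$-distance below $m^{1/p}$ according to how many coordinate positions are of each type. Within the covering argument itself the only thing to watch is that $B_{i,n}(\bm u,m)$ is an \emph{open} ball --- which keeps its cardinality $B_{i,n}(m)$ as small as possible --- that nevertheless contains its own centre (valid precisely because $m\ge1$), so that the balls around the points of $J'_i(m,n)$ genuinely cover all of $J_i(m,n)$ and the bound comes out as stated.
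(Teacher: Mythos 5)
Your assembly of the corollary from inequality \eqref{apji} and Theorem \ref{gvjie} is exactly what the paper does (the paper calls it ``immediate''), so the corollary itself is handled in essentially the same way. The only difference is upstream, in your proof of Theorem \ref{gvjie}: you run the maximality/covering form of the Gilbert--Varshamov argument (a maximum-size code is maximal, hence the open balls $B_{i,n}(\bm u,m)$ around its points cover $J_i(m,n)$, and counting gives the bound), whereas the paper runs the greedy/constructive form (keep adding points while the union of balls fails to exhaust $J_i(m,n)$); these are the two standard, interchangeable renditions of the same GV bound and yield the identical estimate, so there is no substantive gap.
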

\begin{remark}
In \cite[Lemma 2.1]{MR2301531}, the lower bound for $A_p(n, 1/2)$ is given by $\max_{1\leq m\leq n}\left\lceil\frac{{n \choose m_1}2^{m_1}}{B_{1, n}(m)}\right\rceil$. So Corollary \ref{gaijin} gives an improvement.
\end{remark}
\begin{proof}[Proof of Theorem \ref{gvjie}]
Let $i$ be given and $J=\left\lceil\frac{\left|J_i(m, n)\right|}{B_{i, n}(m)}\right\rceil$. We choose points from $J_i(m, n)$ recursively. At first, we arbitrarily choose $\bm{u}_1$ in $J_i(m, n)$. Suppose we have chosen $\bm{u}_1, \bm{u}_2, \ldots, \bm{u}_k$ for some $k<J$. The set
$$
J_i(m, n)\setminus\left(\bigcup_{j=1}^kB_{i, n}(\bm{u}_j, m)\right)
$$
has size at least
$$
\left|J_i(m, n)\right|-\sum_{j=1}^k\left|B_{i, n}(\bm{u}_j, m)\right|=\left|J_i(m, n)\right|-kB_{i, n}(m)>0.
$$
So we can choose $\bm{u}_{k+1}$ from $J_i(m, n)\setminus\left(\bigcup_{j=1}^kB_{i, n}(\bm{u}_j, m)\right)$ and $d_p(\bm{u}_{k+1}, \bm{u}_j)\geq m^{1/p}$ for every $1\leq j\leq k$. This process continues as long as $k<J$. Therefore, $\{\bm{u}_1, \bm{u}_2, \ldots, \bm{u}_J\}$ is a subset of $J_i(m, n)$, in which the points have pairwise distance at least $m^{1/p}$. And hence $\left|J'_i(m, n)\right|\geq J$.
\end{proof}
\section{Some numerical results for small $p$}
It seems that there does not exist an explicit formula for the lower bound in Corollary \ref{gaijin}. So we give some numerical results for small $p$ in this section. In \cite{MR2301531}, Xu gives the lower bound for $\max_{1\leq m\leq n}\left\lceil\frac{{n \choose m_1}2^{m_1}}{B_{1, n}(m)}\right\rceil$. We still need to estimate the rest terms in right hand side of inequality (\ref{apxiajie}).

Define
$$
F_p(\sigma)=\frac{{n \choose \lfloor\sigma n\rfloor}2^{\lfloor\sigma n\rfloor}}{\sum_{2t+2^px<\lfloor\sigma n\rfloor}{\lfloor\sigma n\rfloor\choose t}{n-\lfloor\sigma n\rfloor\choose t}{\lfloor\sigma n\rfloor-t\choose x}2^t}, \sigma\in(0,1).
$$
Then by equations (\ref{midaxiao})-(\ref{bidaxiao}) and inequality (\ref{apxiajie}), we have
\begin{equation*}
A_p(n, 1/2)\geq\max_{0<\sigma<1}\sum_{i=1}^rF_p\left(\frac{\sigma}{2^{(i-1)p}}\right).
\end{equation*}
\subsection{The value of $r$}
We first estimate the value of $r$. Suppose $m=\lceil2^{kp}+2^{(k-1)p}+\cdots+2^p\rceil$ for some $k$. Then
$$
m_1=m=\lceil2^{kp}+2^{(k-1)p}+\cdots+2^p\rceil\in\left[2^{kp}+2^{(k-1)p}+\cdots+2^p, 2^{kp}+2^{(k-1)p}+\cdots+2^p+1\right].
$$
We calculate
\begin{equation*}
\begin{split}
m_2=\left\lfloor\frac{m_1}{2^p}\right\rfloor&\in\left[\lfloor2^{(k-1)p}+2^{(k-2)p}+\cdots+1\rfloor, \lfloor2^{(k-1)p}+2^{(k-2)p}+\cdots+1+2^{-p}\rfloor\right]\\
&\subseteq\left[2^{(k-1)p}+2^{(k-2)p}+\cdots+2^p, 2^{(k-1)p}+2^{(k-2)p}+\cdots+1+2^{-p}\right],
\end{split}
\end{equation*}
and
\begin{equation*}
\begin{split}
m_3=\left\lfloor\frac{m_2}{2^p}\right\rfloor&\in\left[\lfloor2^{(k-2)p}+2^{(k-3)p}+\cdots+1\rfloor, \lfloor2^{(k-2)p}+2^{(k-3)p}+\cdots+2^{-p}+2^{-2p}\rfloor\right]\\
&\subseteq\left[2^{(k-2)p}+2^{(k-3)p}+\cdots+2^p, 2^{(k-2)p}+2^{(k-3)p}+\cdots+2^{-p}+2^{-2p}\right].
\end{split}
\end{equation*}
So
$$
m_k\in\left[2^p, 2^p+1+2^{-p}+\cdots+2^{-(k-1)p}\right],
$$
and
$$
m_{k+1}\in\left[1, 1+2^{-p}+2^{-2p}+\cdots+2^{-kp}\right]\subseteq[1, 2).
$$
Therefore $m_{k+1}=1$ and $r=k+1$ if $m=\lceil2^{kp}+2^{(k-1)p}+\cdots+2^p\rceil$. Note that $\lceil2^{kp}+2^{(k-1)p}+\cdots+2^p\rceil\in[2^{kp}, 2^{(k+1)p})$. On the other hand, if $m\in[2^{kp}, \lceil2^{kp}+2^{(k-1)p}+\cdots+2^p\rceil)$, then $m_k$ may be less than $2^p$. So we conclude that $r=\lfloor\log_{2^p}m\rfloor+1$ or $r=\lfloor\log_{2^p}m\rfloor$.

\subsection{The behavior of $F_p(\sigma)$}
In this subsection, we investigate the asymptotic behavior of $F_p(\sigma)$.

Let $H(\sigma)$ be the entropy function defined as
$$
H(\sigma)=\left\{
  \begin{array}{ll}
    0, & \hbox{if $\sigma=0$ or $\sigma=1$;} \\
    -\sigma\log_2\sigma-(1-\sigma)\log_2(1-\sigma), & \hbox{if $0<\sigma<1$.}
  \end{array}
\right.
$$
We have the following theorem.
\begin{theorem}[\cite{MR2301531}]
We have
$$
\lim_{n\rightarrow\infty}\frac{1}{n}\log_2F_p(\sigma)\geq\min_{0\leq y\leq\min\{\sigma/2, 1-\sigma\}}f_p(\sigma, y),
$$
where
$$
f_p(\sigma, y)=(\sigma-y)\left(1-H\left(\frac{\sigma-2y}{2^p(\sigma-y)}\right)\right)+H(\sigma)-\sigma H\left(\frac y\sigma\right)-(1-\sigma)H\left(\frac{y}{1-\sigma}\right).
$$
\end{theorem}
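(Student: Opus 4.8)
The plan is to identify the exponential growth rate of $F_p(\sigma)$ with a concrete optimization problem and then solve that problem explicitly. Write $s=\lfloor\sigma n\rfloor$. For the numerator I would use the standard entropy estimates $2^{nH(k/n)}/(n+1)\le\binom nk\le 2^{nH(k/n)}$ together with $2^{s}\ge 2^{\sigma n}/2$ and the continuity of $H$, obtaining
\[
\frac1n\log_2\!\left(\binom{n}{s}2^{s}\right)\ \ge\ H(\sigma)+\sigma-o(1).
\]
For the denominator, in each summand $\binom{s}{t}\binom{n-s}{t}\binom{s-t}{x}2^{t}$ I would put $t=\tau n$, $x=\xi n$ and apply the one-sided bound $\binom{N}{k}\le 2^{NH(k/N)}$ to every binomial coefficient, which bounds the generic term by $2^{n g(\tau,\xi)}$ with
\[
g(\tau,\xi)=\sigma H\!\left(\frac{\tau}{\sigma}\right)+(1-\sigma)H\!\left(\frac{\tau}{1-\sigma}\right)+(\sigma-\tau)H\!\left(\frac{\xi}{\sigma-\tau}\right)+\tau .
\]
Since the sum has only $O(n^{2})$ terms and each discrete point $(t/n,x/n)$ lies (up to an $o(1)$ slack coming from the floor) in the compact region
\[
R=\left\{(\tau,\xi):\ \tau\ge 0,\ \xi\ge 0,\ 2\tau+2^{p}\xi\le\sigma,\ \tau\le 1-\sigma\right\},
\]
on which $g$ is continuous, I would conclude, using uniform continuity of $H$ to pass from the discrete maximum to $\max_R g$,
\[
\frac1n\log_2\!\left(\sum_{2t+2^{p}x<s}\binom{s}{t}\binom{n-s}{t}\binom{s-t}{x}2^{t}\right)\ \le\ \max_{R}g+o(1).
\]
Note that $2\tau+2^{p}\xi\le\sigma$ forces $\tau\le\sigma/2$, so effectively $0\le\tau\le\min\{\sigma/2,1-\sigma\}$.

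Combining the two estimates gives
\[
\liminf_{n\to\infty}\frac1n\log_2 F_p(\sigma)\ \ge\ H(\sigma)+\sigma-\max_{R}g\ =\ \min_{R}\bigl(H(\sigma)+\sigma-g(\tau,\xi)\bigr),
\]
so it remains to evaluate the minimization. The key point is that for a fixed $\tau$ the only $\xi$-dependence sits in $(\sigma-\tau)H\!\bigl(\tfrac{\xi}{\sigma-\tau}\bigr)$, which one wants to make as large as possible; the constraint forces $\tfrac{\xi}{\sigma-\tau}\le\tfrac{\sigma-2\tau}{2^{p}(\sigma-\tau)}$, and since this last quantity is at most $\tfrac12$ for every $p\ge 1$ and every admissible $\tau$, monotonicity of $H$ on $[0,\tfrac12]$ shows the optimal choice is the boundary value $\xi=\tfrac{\sigma-2\tau}{2^{p}}$. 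Substituting it and regrouping via $\sigma-\tau-(\sigma-\tau)H(\cdot)=(\sigma-\tau)\bigl(1-H(\cdot)\bigr)$, the quantity $H(\sigma)+\sigma-g$ collapses to exactly
\[
f_p(\sigma,\tau)=(\sigma-\tau)\Bigl(1-H\bigl(\tfrac{\sigma-2\tau}{2^{p}(\sigma-\tau)}\bigr)\Bigr)+H(\sigma)-\sigma H\bigl(\tfrac{\tau}{\sigma}\bigr)-(1-\sigma)H\bigl(\tfrac{\tau}{1-\sigma}\bigr),
\]
so that $\min_{R}\bigl(H(\sigma)+\sigma-g\bigr)=\min_{0\le y\le\min\{\sigma/2,1-\sigma\}}f_p(\sigma,y)$, which is the asserted bound. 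I would also remark that running the same Stirling-type estimates in the opposite direction (lower bounds on the binomials in the denominator, upper bound on the numerator) shows the $\liminf$ is in fact a genuine limit with equality, though only the stated lower bound is needed.

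The main obstacle is the elementary inequality $\tfrac{\sigma-2\tau}{2^{p}(\sigma-\tau)}\le\tfrac12$, equivalently $(2^{p}-2)\sigma+(4-2^{p})\tau\ge 0$, which has to be verified for all $p\ge 1$ and all $\tau\in[0,\min\{\sigma/2,1-\sigma\}]$: for $1\le p\le 2$ both coefficients are nonnegative (the tightest case being $\tau=0$, where it reduces to $2^{p}\ge 2$), while for $p\ge 2$ one uses $\tau\le\sigma/2$ to dominate the negative term $(4-2^{p})\tau$, ending up with a bound of the form $\sigma\,2^{p-1}\ge 0$. Everything else is routine bookkeeping: the discrete-to-continuous passage for the maximum, the replacement of $\lfloor\sigma n\rfloor$ by $\sigma n$, and the handling of the boundary of $R$ (where some $H$-terms vanish), all of which cost only $o(n)$ in the exponent by continuity and boundedness of $H$.
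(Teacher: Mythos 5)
The paper itself gives no proof of this theorem; it is cited verbatim from Xu \cite{MR2301531}, so there is no internal argument to compare against. Your reconstruction is correct and is the standard entropy-asymptotics argument one would expect Xu's proof to be: apply $\frac{1}{n+1}2^{nH(k/n)}\le\binom nk\le2^{nH(k/n)}$ to numerator and denominator, observe the denominator has only polynomially many terms so its exponential rate is the pointwise maximum of the term rates over the limiting region $R=\{\tau,\xi\ge0,\ 2\tau+2^p\xi\le\sigma,\ \tau\le1-\sigma\}$, and then optimize out $\xi$. The one nontrivial point you correctly isolate is that $\frac{\sigma-2\tau}{2^p(\sigma-\tau)}\le\frac12$ on the feasible set, so $H$ is monotone there and the inner maximum sits at $\xi=(\sigma-2\tau)/2^p$; your case check (both coefficients of $(2^p-2)\sigma+(4-2^p)\tau$ nonnegative for $1\le p\le2$, and $\tau\le\sigma/2$ absorbing the negative coefficient for $p>2$, giving $\ge\sigma 2^{p-1}$) is right. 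The algebraic regrouping $(\sigma-\tau)-(\sigma-\tau)H(\cdot)=(\sigma-\tau)(1-H(\cdot))$ then yields $f_p(\sigma,\tau)$ exactly, and the identification of the $\tau$-range with $[0,\min\{\sigma/2,1-\sigma\}]$ follows since $\xi\ge0$ forces $\tau\le\sigma/2$. No gaps; the bookkeeping about floors, the $O(n^2)$ term count, and the discrete-to-continuous passage via uniform continuity of $H$ are all handled correctly and only cost $o(1)$ in the rate.
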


\subsection{Numerical results for some special values of $p$}
Let $g_p(\sigma)=\min_{0\leq y\leq\min\{\sigma/2, 1-\sigma\}}f_p(\sigma, y)$. We list some numerical results for special values of $p$.
\\

For $p=1$, see left hand side of Figure \ref{g1g2} for the graph of $g_1(\sigma)$. $g_1(\sigma)$ attains its maximum $0.1825$ at $\sigma_0=0.2605$.  So
\begin{equation*}
\begin{split}
A_1(n, 1/2)&\geq\max_{0\leq \sigma\leq 1}\sum_{i=1}^rF_1\left(\frac{\sigma}{2^{i-1}}\right)\\
&\geq\sum_{i=1}^rF_1\left(\frac{2\sigma_0}{2^{i-1}}\right)\\
&\geq F_1\left(2\sigma_0\right)+F_1\left(\sigma_0\right)+F_1\left(\frac{\sigma_0}{2}\right)+\cdots\\
&\geq 2^{g_1(2\sigma_0)\cdot n(1+o(1))}+2^{g_1(\sigma_0)\cdot n(1+o(1))}+2^{g_1(\sigma_0/2)\cdot n(1+o(1))}+\cdots\\
&=2^{0.1247n(1+o(1))}+2^{0.1825n(1+o(1))}+2^{0.1554n(1+o(1))}+\cdots.
\end{split}
\end{equation*}
Although $2^{0.1247n(1+o(1))}+2^{0.1554n(1+o(1))}+\cdots=o(2^{0.1825n(1+o(1))})$, we still write them explicitly since they improve the previous bound.
\begin{remark}
In \cite{MR1767027}, Talata obtained $A_1(n, 1/2)\geq2^{0.1825n(1+o(1))}$ as well.
\end{remark}

For $p=2$, see right hand side of Figure \ref{g1g2} for the graph of $g_2(\sigma)$. $g_2(\sigma)$ attains its maximum $0.2059$ at $\sigma_0=0.3881$.  So
\begin{equation*}
\begin{split}
A_2(n, 1/2)&\geq\max_{0\leq \sigma\leq 1}\sum_{i=1}^rF_2\left(\frac{\sigma}{2^{2(i-1)}}\right)\\
&\geq\sum_{i=1}^rF_2\left(\frac{\sigma_0}{4^{i-1}}\right)\\
&\geq F_2\left(\sigma_0\right)+F_2\left(\frac{\sigma_0}{4}\right)+F_2\left(\frac{\sigma_0}{4^{2}}\right)+\cdots\\
&\geq 2^{g_2(\sigma_0)\cdot n(1+o(1))}+2^{g_2(\sigma_0/4)\cdot n(1+o(1))}+2^{g_2(\sigma_0/16)\cdot n(1+o(1))}+\cdots\\
&=2^{0.2059n(1+o(1))}+2^{0.1381n(1+o(1))}+2^{0.0584n(1+o(1))}+\cdots.
\end{split}
\end{equation*}
 \begin{figure}
\centering
{
\includegraphics[width=7.5cm]{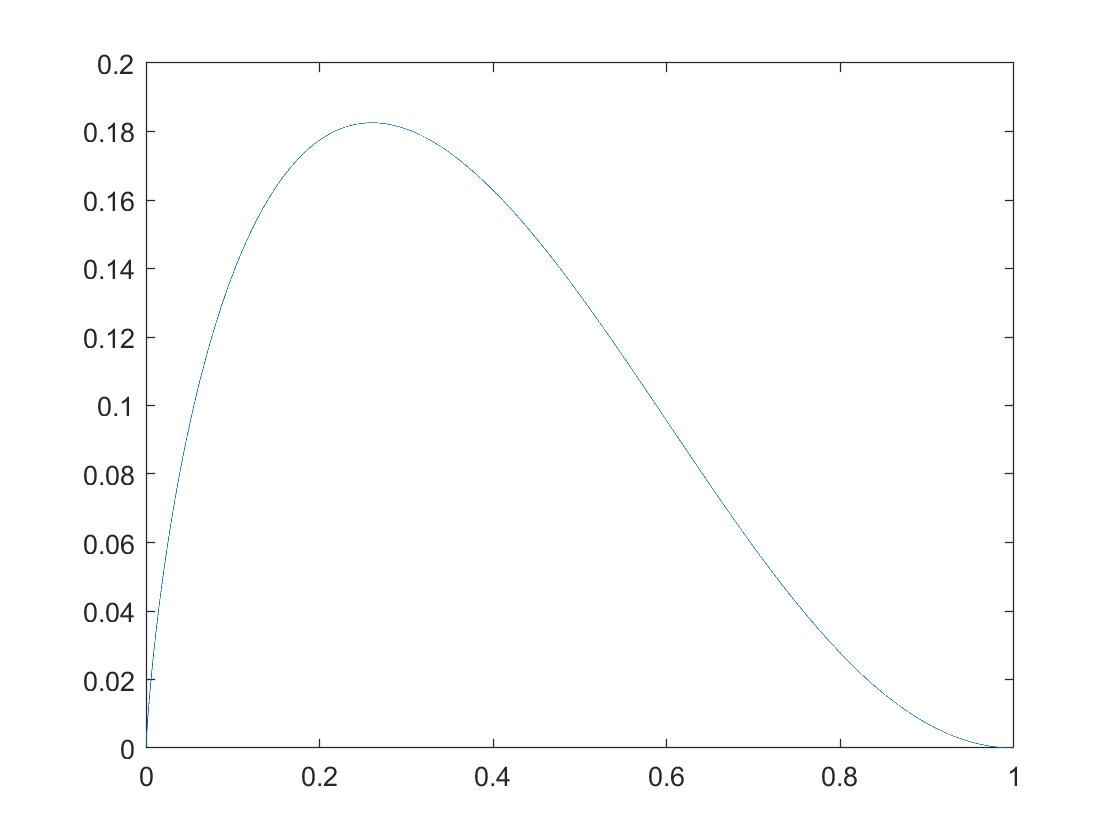}}
\hspace{0.5in}
{
\includegraphics[width=7.5cm]{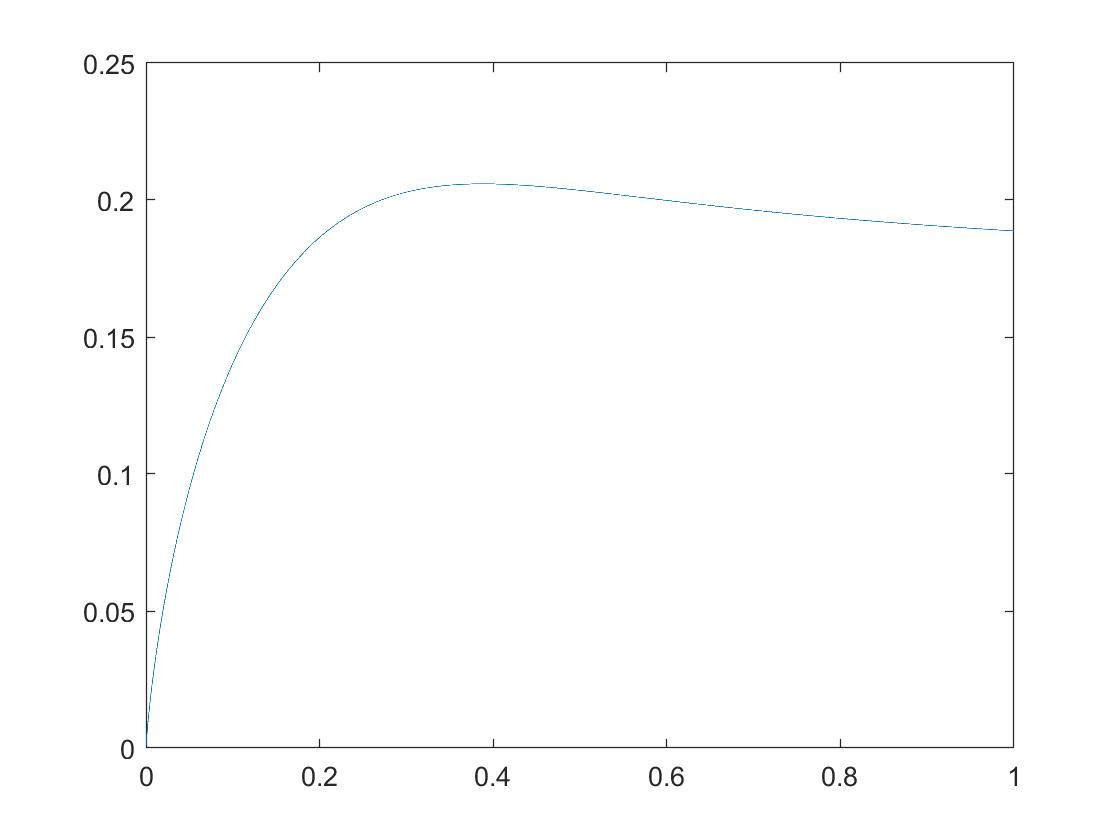}}
\hspace{0.5in}
\caption{The graphs of $g_1(\sigma)$ and $g_2(\sigma)$}\label{g1g2}
\end{figure}
We also write the  $2^{0.1381n(1+o(1))}+2^{0.0584n(1+o(1))}+\cdots=o(2^{0.2059n(1+o(1))})$ terms explicitly.
\\

For $p=2.1$, see Figure \ref{g21} for the graph of $g_{2.1}(\sigma)$.
 \begin{figure}
\centering
{
\includegraphics[width=9cm]{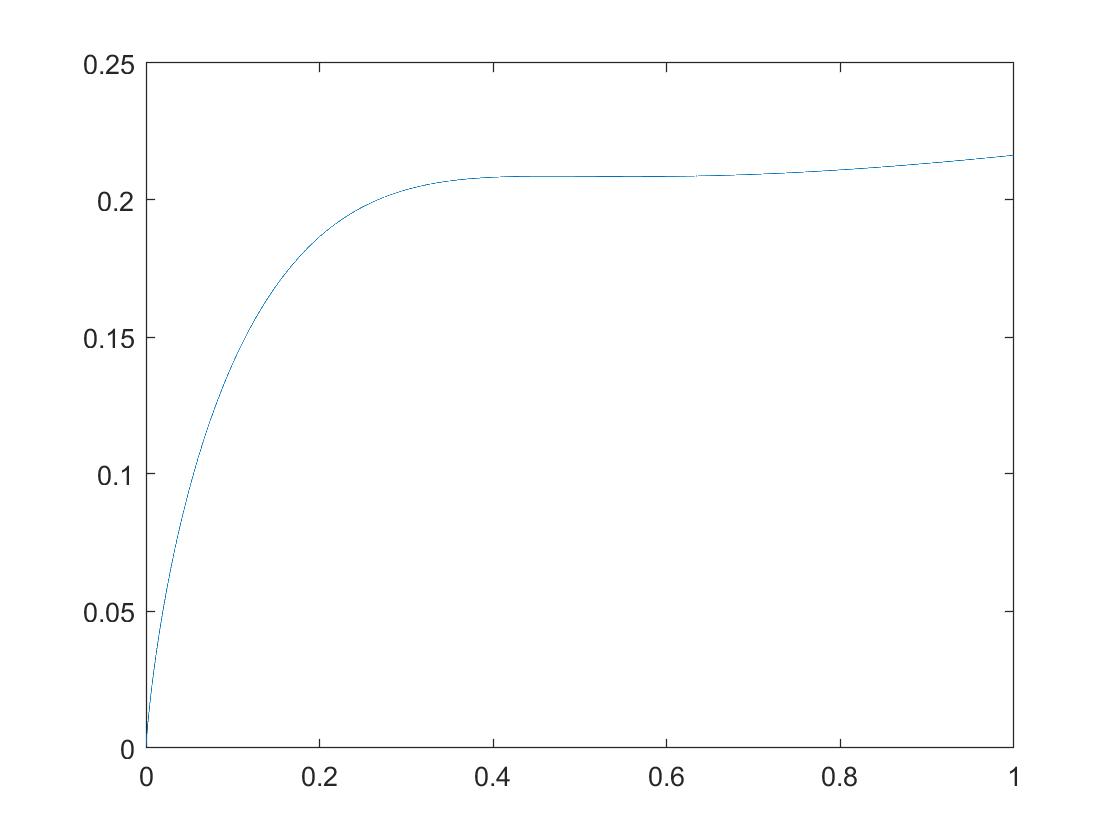}}

\caption{The graph of $g_{2.1}(\sigma)$}\label{g21}
\end{figure}
$g_{2.1}(\sigma)$ attains its maximum $0.2163$ at $\sigma_0=0.9998$.  So
\begin{equation*}
\begin{split}
A_{2.1}(n, 1/2)&\geq\max_{0\leq \sigma\leq 1}\sum_{i=1}^rF_{2.1}\left(\frac{\sigma}{2^{2.1(i-1)}}\right)\\
&\geq\sum_{i=1}^rF_{2.1}\left(\frac{\sigma_0}{4.2871^{i-1}}\right)\\
&\geq F_{2.1}\left(\sigma_0\right)+F_{2.1}\left(\frac{\sigma_0}{4.2871}\right)+F_{2.1}\left(\frac{\sigma_0}{4.2871^{2}}\right)+\cdots\\
&\geq 2^{g_{2.1}(\sigma_0)\cdot n(1+o(1))}+2^{g_{2.1}(\sigma_0/4.2871)\cdot n(1+o(1))}+2^{g_{2.1}(\sigma_0/18.3792)\cdot n(1+o(1))}+\cdots\\
&=2^{0.2163n(1+o(1))}+2^{0.1944n(1+o(1))}+2^{0.0995n(1+o(1))}+\cdots.
\end{split}
\end{equation*}
We also write the  $2^{0.1944n(1+o(1))}+2^{0.0995n(1+o(1))}+\cdots=o(2^{0.2163n(1+o(1))})$ terms explicitly.

\section{Some numerical results for large $p$}
There exists a threshold $p_0\approx2.1$ (we do not attempt to calculate the exact value of $p_0$) such that when $p>p_0$, $F_p(\sigma)$ attains its maximum at $\sigma=1$. For $\sigma=1$, i.e. $m=n$, we have another lower bound. Let $m=n$, and recall inequalities (\ref{apji}) and (\ref{jixiajie}). We have
\begin{equation*}
\begin{split}
A_p(n, 1/2)&\geq\sum_{i=1}^r\left|J'_i(n, n)\right|\\
&=\left|J'_1(n, n)\right|+\sum_{i=2}^r\left|J'_i(n, n)\right|\\
&\geq\left|J'_1(n, n)\right|+\sum_{i=2}^r\left\lceil\frac{{n \choose m_i}2^{m_i}}{B_{i, n}(n)}\right\rceil\\
&=\left|J'_1(n, n)\right|+\sum_{i=2}^rF_p\left(\frac{1}{2^{p(i-1)}}\right).
\end{split}
\end{equation*}
Indeed, we can improve the lower bound for $\left|J'_1(n, n)\right|$ slightly.

\subsection{An improvement of the lower bound for $\left|J'_1(n, n)\right|$}
Recall the definition of $J_1(n, n)$ and $J'_1(n, n)$. $J_1(n, n)=\{\pm1\}^n$ and $J'_1(n, n)$ is a largest subset of $\{\pm1\}^n$ in which points have pairwise distance larger than or equal to $n^{1/p}$. For $\bm{u}, \bm{v}\in\{\pm1\}^n$, let $d_H(\bm{u}, \bm{v}):=|\{i:u_i\neq v_i\}|$ be the Hamming distance between them. The following lemma is an easy observation.
\begin{lemma}
For every $\bm{u}, \bm{v}\in\{\pm1\}^n$, we have
$$
\left(d_p(\bm{u}, \bm{v})\right)^p=2^p\cdot d_H(\bm{u}, \bm{v}).
$$
\end{lemma}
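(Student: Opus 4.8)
The plan is to argue coordinatewise and then sum. Fix $\bm{u}=(u_1,\ldots,u_n)$ and $\bm{v}=(v_1,\ldots,v_n)$ in $\{\pm1\}^n$. First I would observe that for each index $i$ there are only two possibilities: either $u_i=v_i$, in which case $|u_i-v_i|=0$ and hence $|u_i-v_i|^p=0$; or $u_i\neq v_i$, in which case $\{u_i,v_i\}=\{1,-1\}$ (since each entry lies in $\{\pm1\}$), so $|u_i-v_i|=2$ and therefore $|u_i-v_i|^p=2^p$.

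Next I would assemble these contributions. By definition of the $\ell_p$-distance,
$$
\left(d_p(\bm{u},\bm{v})\right)^p=\sum_{i=1}^n|u_i-v_i|^p=\sum_{i:\,u_i\neq v_i}2^p=2^p\cdot\bigl|\{i:u_i\neq v_i\}\bigr|=2^p\cdot d_H(\bm{u},\bm{v}),
$$
where the second equality uses the coordinatewise case analysis (the terms with $u_i=v_i$ vanish), and the last equality is just the definition of the Hamming distance. This establishes the claimed identity.

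There is essentially no obstacle here: the only point requiring a moment's care is the elementary fact that two distinct elements of $\{\pm1\}$ must be $1$ and $-1$, so that a disagreeing coordinate always contributes exactly $2^p$ rather than some $\bm{u},\bm{v}$-dependent amount. Everything else is the definition of $d_p$ and of $d_H$, so the proof is a single displayed computation.
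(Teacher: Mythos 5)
Your proof is correct, and it is exactly the straightforward coordinatewise computation one would give; the paper in fact states this lemma without proof, calling it "an easy observation," so there is no alternative argument in the paper to compare against.
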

By this lemma, it suffices to find a largest subset of $\{\pm1\}^n$, in which points have pairwise Hamming distance larger than or equal to $\lceil n/2^p\rceil$. Recall the definition of $B_{1, n}(\bm{u}, n)$ and we have
\begin{equation*}
\begin{split}
B_{1, n}(\bm{u}, n)&=\left\{\bm{v}\in \{\pm1\}^n: d_p(\bm{u}, \bm{v})<n^{1/p}\right\}\\
&=\left\{\bm{v}\in \{\pm1\}^n: 2^p\cdot d_H(\bm{u}, \bm{v})<n\right\}\\
&=\left\{\bm{v}\in \{\pm1\}^n: d_H(\bm{u}, \bm{v})\leq\lceil n/2^p\rceil-1\right\}.
\end{split}
\end{equation*}
So $B_{1, n}(n)=|B_{1, n}(\bm{u}, n)|=\sum_{k=0}^{\lceil n/2^p\rceil-1}{n\choose k}$. We have the following theorem, which gives a  better lower bound for $|J'_1(n, n)|$ than that in inequality (\ref{jixiajie}).
\begin{theorem}[\cite{MR2096836}]
There exists a positive constant $c$ such that
$$
|J'_1(n, n)|\geq c\frac{2^n}{B_{1, n}(n)}\log_2B_{1, n}(n).
$$
\end{theorem}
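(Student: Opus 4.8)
The plan is to recognize $|J'_1(n,n)|$ as the largest size of an ordinary binary code and then to quote the Jiang--Vardy improvement of the Gilbert--Varshamov bound. First I would unwind the definitions. By the lemma above, $J_1(n,n)=\{\pm1\}^n$ and $J'_1(n,n)$ is a largest subset of $\{\pm1\}^n$ any two of whose points are at Hamming distance at least $d:=\lceil n/2^p\rceil$; relabeling $\{\pm1\}$ as $\{0,1\}$, this is exactly a largest binary code of length $n$ and minimum distance $d$. Moreover the computation just above the statement identifies $B_{1,n}(n)=\sum_{k=0}^{d-1}\binom nk$ as the size of a Hamming ball of radius $d-1$. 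Hence the inequality in the statement is precisely the Jiang--Vardy improvement of the Gilbert--Varshamov bound for binary codes, i.e. the main theorem of \cite{MR2096836}. The one hypothesis I would check is that the relative distance $d/n$ stays bounded away from $1/2$; since $p>1$ we have $d/n\to 2^{-p}<1/2$, so \cite{MR2096836} applies verbatim and the theorem follows.

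For completeness I indicate the idea of \cite{MR2096836}. Model binary codes as independent sets: let $G$ be the graph with vertex set $\{\pm1\}^n$ in which $\bm{u}\sim\bm{v}$ whenever $1\le d_H(\bm{u},\bm{v})\le d-1$, so that the neighborhood of $\bm{u}$ is $B_{1,n}(\bm{u},n)\setminus\{\bm{u}\}$. Then $G$ is vertex-transitive on $N:=2^n$ vertices, is $D$-regular with $D=B_{1,n}(n)-1$, and a set of vectors has minimum distance $\ge d$ exactly when it is independent in $G$; thus $|J'_1(n,n)|=\alpha(G)$. The greedy (Tur\'an-type) bound $\alpha(G)\ge N/(D+1)$ is nothing but the classical Gilbert--Varshamov bound, so the task is to gain an extra multiplicative factor of order $\log D=\Theta(n)$.

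The gain comes from the fact that, in the regime $d<n/2$, the graph $G$ is \emph{locally sparse}: two vectors of weight at most $d-1$ are typically at Hamming distance roughly $2d(1-d/n)$, which exceeds $d$ precisely because $d<n/2$, hence they are non-adjacent; consequently each neighborhood of $G$ spans only about $D^{2-\epsilon}$ edges for some $\epsilon=\epsilon(p)>0$, far fewer than the $\binom D2$ available. I would then feed this local-sparsity estimate into a bound for the independence number of locally sparse graphs, in the spirit of Ajtai--Koml\'{o}s--Szemer\'{e}di and Shearer, which converts such sparsity into a multiplicative gain of order $\log D$ over the greedy bound. I expect the main obstacle to be exactly this last step made quantitative --- producing an explicit local-sparsity bound for Hamming balls and verifying that the hypotheses of the locally sparse independence theorem hold uniformly in $n$; this technical core is what \cite{MR2096836} carries out, and in the present paper we simply invoke it.
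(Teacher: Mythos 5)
Your proposal is correct and matches the paper's own treatment: the paper does not prove this theorem but simply identifies $J_1'(n,n)$ as the maximum binary code of length $n$ and minimum Hamming distance $\lceil n/2^p\rceil$ (via the lemma $d_p^p=2^p d_H$ and the computation of $B_{1,n}(n)$ as a Hamming ball volume) and then cites Jiang--Vardy \cite{MR2096836}, exactly as you do. Your observation that the Jiang--Vardy hypothesis $d/n$ bounded away from $1/2$ holds because $d/n\to 2^{-p}<1/2$ for $p>1$ (and hence for the large-$p$ regime where the paper invokes this theorem) is the right sanity check, and your sketch of the independent-set/local-sparsity mechanism behind Jiang--Vardy is accurate, though the paper leaves all of that inside the citation.
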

Note that
$$
\lim_{n\rightarrow\infty}\frac{1}{n}\log_2B_{1, n}(n)=H\left(\frac{1}{2^p}\right),
$$
by Stirling's formula. So
$$
|J'_1(n, n)|\geq c\frac{n2^n}{B_{1, n}(n)}=cn2^{n(1-H(2^{-p})+o(1))},
$$
for some constant $c$ (maybe depends on $p$). Although $n=2^{o(n)}$, we write it explicitly to represent the improvement.
\subsection{Numerical results for some special values of $p$}
As before, let $g_p(\sigma)=\min_{0\leq y\leq\min\{\sigma/2, 1-\sigma\}}f_p(\sigma, y)$. We list some numerical results for special values of $p$.
\\

For $p=2.2$, see left hand side of Figure \ref{g22g3} for the graph of $g_{2.2}(\sigma)$.
We have
\begin{equation*}
\begin{split}
A_{2.2}(n, 1/2)&\geq\left|J'_1(n, n)\right|+\sum_{i=2}^rF_{2.2}\left(\frac{1}{2^{2.2(i-1)}}\right)\\
&\geq cn2^{n(1-H(2^{-2.2})+o(1))}+F_{2.2}\left(0.2176\right)+F_{2.2}\left(0.0474\right)+\cdots\\
&\geq cn2^{n(1-H(2^{-2.2})+o(1))}+2^{g_{2.2}(0.2176)\cdot n(1+o(1))}+2^{g_{2.2}(0.0474)\cdot n(1+o(1))}+\cdots\\
&=cn2^{0.2442n(1+o(1))}+2^{0.1913n(1+o(1))}+2^{0.0915n(1+o(1))}+\cdots.
\end{split}
\end{equation*}
\\

For $p=3$, see right hand side of Figure \ref{g22g3} for the graph of $g_3(\sigma)$. We have
\begin{equation*}
\begin{split}
A_3(n, 1/2)&\geq\left|J'_1(n, n)\right|+\sum_{i=2}^rF_{3}\left(\frac{1}{2^{3(i-1)}}\right)\\
&\geq cn2^{n(1-H(2^{-3})+o(1))}+F_{3}\left(0.1250\right)+F_3\left(0.0156\right)+\cdots\\
&\geq cn2^{n(1-H(2^{-3})+o(1))}+2^{g_{3}(0.1250)\cdot n(1+o(1))}+2^{g_{3}(0.0156)\cdot n(1+o(1))}+\cdots\\
&=cn2^{0.4564n(1+o(1))}+2^{0.1562n(1+o(1))}+2^{0.0425n(1+o(1))}+\cdots.
\end{split}
\end{equation*}
\begin{figure}
\centering
{
\includegraphics[width=7.5cm]{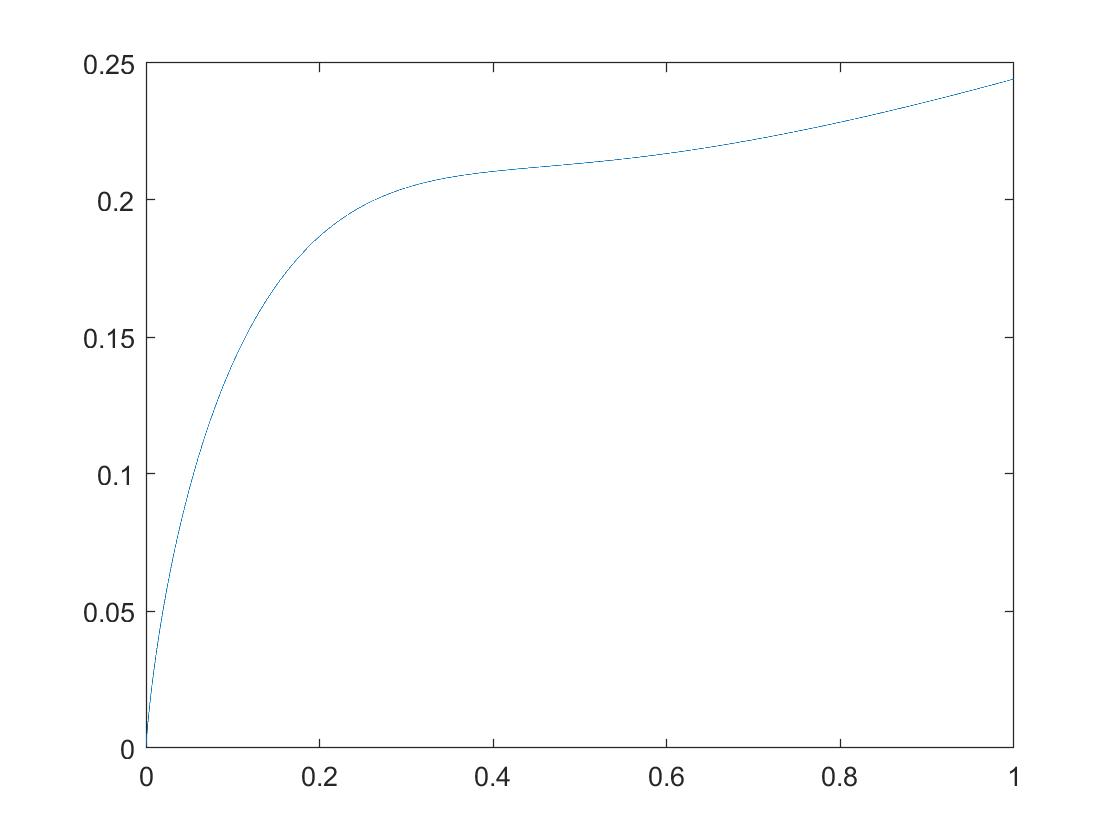}}
\hspace{0.5in}
{
\includegraphics[width=7.5cm]{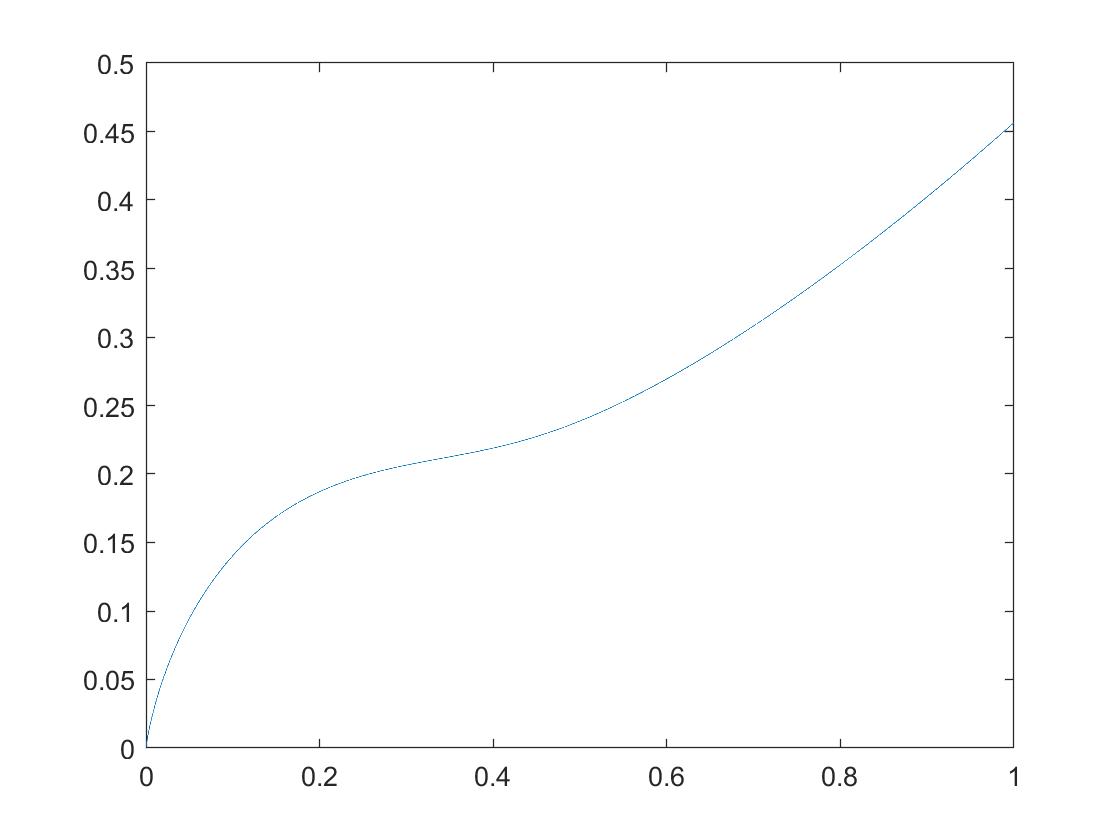}}
\hspace{0.5in}
\caption{The graphs of $g_{2.2}(\sigma)$ and $g_3(\sigma)$}\label{g22g3}
\end{figure}
\\

For $p=4$, see Figure \ref{g4} for the graph of $g_4(\sigma)$.
We have
\begin{equation*}
\begin{split}
A_4(n, 1/2)&\geq\left|J'_1(n, n)\right|+\sum_{i=2}^rF_{4}\left(\frac{1}{2^{4(i-1)}}\right)\\
&\geq cn2^{n(1-H(2^{-4})+o(1))}+F_{4}\left(0.0625\right)+F_4\left(0.0039\right)+\cdots\\
&\geq cn2^{n(1-H(2^{-4})+o(1))}+2^{g_{4}(0.0625)\cdot n(1+o(1))}+2^{g_{4}(0.0039)\cdot n(1+o(1))}+\cdots\\
&=cn2^{0.6627n(1+o(1))}+2^{0.1083n(1+o(1))}+2^{0.0145n(1+o(1))}+\cdots.
\end{split}
\end{equation*}
 \begin{figure}
\centering
{
\includegraphics[width=9cm]{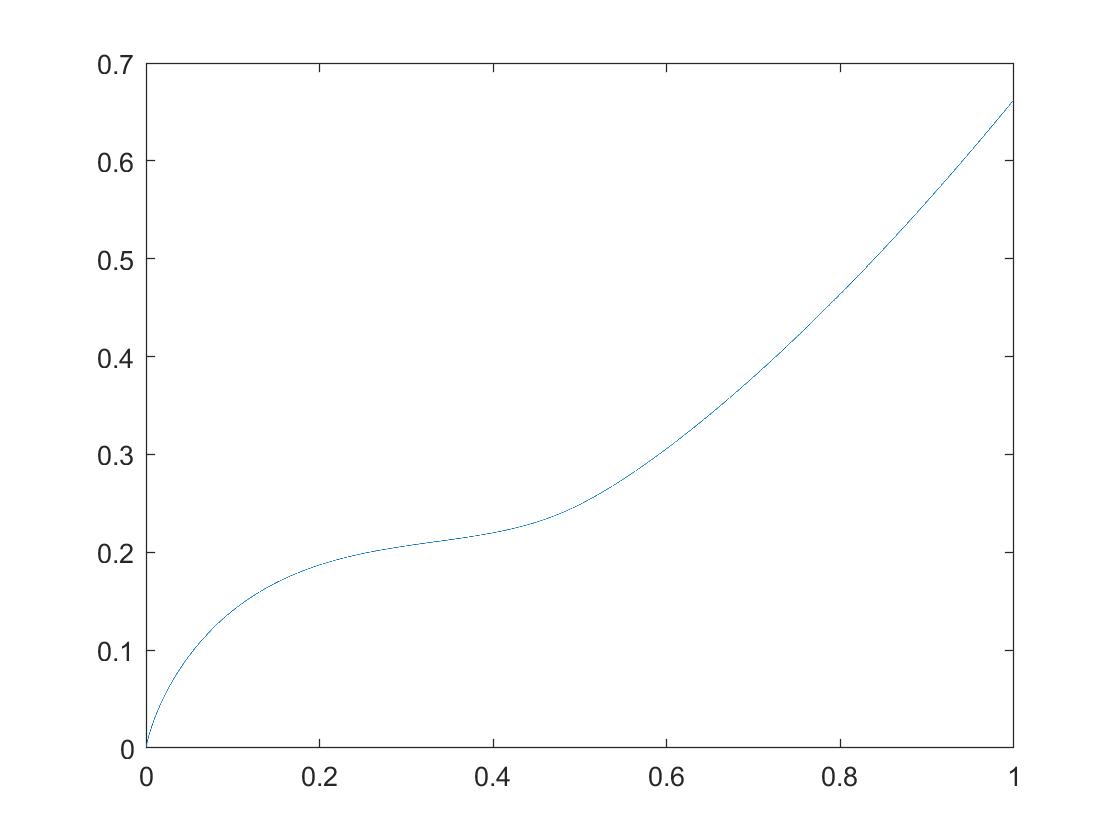}}

\caption{The graph of $g_{4}(\sigma)$}\label{g4}
\end{figure}
\section{Further remarks}
In \cite{MR4064778}, Sah et al$.$ obtained an inequality between $\ell_p$-spherical codes for different $p$; that is, $A_p(n, d)\leq A_q(n, d^{p/q})$ for all $1\leq q\leq p$ and $d\in(0, 1]$. So
\begin{equation}\label{12zhijian}
A_2(n, d)\leq A_p(n, d^{2/p}), \text{ if }1\leq p \leq 2,
\end{equation}
and
\begin{equation}\label{dayu2}
A_p(n, d)\leq A_2(n, d^{p/2}), \text{ if }p\geq2.
\end{equation}
Sah et al$.$ used inequality (\ref{dayu2}) to obtain an upper bound for $A_p(n, d)$ $(p\geq2)$.

On the other hand, Swanepoel \cite{MR1750139} had used inequality (\ref{12zhijian}) to obtain a lower bound for $A_p(n, 1/2)$ $(1.62107<p\leq2)$ before. Because the best lower bound for $A_2(n, d)$ has been improved since then, we update this type of lower bound here. We need the following theorem, which is the best known lower bound for $A_2(n, d)$ $(d\in(0, 1))$.
\begin{theorem}[\cite{2021arXiv211101255G}]
Let $\theta\in(0, \pi/2)$ be fixed. Then
$$
A_2(n, \sin(\theta/2))\geq(1+o(1))\ln\frac{\sin\theta}{\sqrt2\sin(\theta/2)}\cdot n\cdot\frac{\sqrt{2\pi n}\cos\theta}{\sin^{n-1}\theta}.
$$
\end{theorem}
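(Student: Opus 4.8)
\emph{Sketch of a proof.} The plan is to reduce this spherical-code lower bound to an independent-set problem in a geometric random graph, and then run the occupancy method for the hard-core model. Fix $\theta\in(0,\pi/2)$ and observe that $C\subseteq S^{n-1}$ has pairwise $\ell_2$-distance at least $2\sin(\theta/2)$ exactly when its points have pairwise angular distance at least $\theta$. Let $\mathcal P$ be a homogeneous Poisson point process of intensity $t$ on $S^{n-1}$, and let $G=G_t(\theta)$ be the graph on $\mathcal P$ joining pairs at angular distance less than $\theta$. Every independent set of $G$ is an admissible code, so $A_2(n,\sin(\theta/2))\ge\mathbb E[\alpha(G_t(\theta))]$ for every $t$; it therefore suffices to lower-bound the right-hand side for a well-chosen (large, eventually optimized) intensity $t=t(n)$ and then send $n\to\infty$. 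The naive greedy/covering bound here is $\alpha(G)\gtrsim 1/\mu(\theta)$, where $\mu(\theta)$ is the normalized measure of a spherical cap of angular radius $\theta$; Laplace's method gives $\mu(\theta)^{-1}=(1+o(1))\frac{\cos\theta\,\sqrt{2\pi n}}{\sin^{n-1}\theta}$, so all that is at stake is the extra factor $(1+o(1))\,n\ln\frac{\sin\theta}{\sqrt2\sin(\theta/2)}$.

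To get it, let $\mathbf I$ be a random independent set of $G$ drawn from the hard-core distribution $\Pr[\mathbf I=J]\propto\lambda^{|J|}$, so that $A_2(n,\sin(\theta/2))\ge\mathbb E|\mathbf I|=\overline\alpha_\lambda\cdot t\,|S^{n-1}|$, where $\overline\alpha_\lambda=\Pr[x\in\mathbf I]$ for a typical point $x$ is the occupancy fraction. Using $\Pr[x\in\mathbf I]=\frac{\lambda}{1+\lambda}\Pr[\mathbf I\cap N(x)=\varnothing]$ together with the spatial Markov property, the problem reduces to lower-bounding $\Pr[\mathbf I\cap N(x)=\varnothing]$ in terms of $\overline\alpha_\lambda$ and the local structure of $G$ near $x$. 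The crucial point is that $N(x)$ is a spherical cap on which the induced graph is — after rescaling — again a geometric proximity graph of essentially the same type, so the occupancies inside $N(x)$ obey the same kind of estimate and, in particular, the local codegrees are exponentially (in $n$) smaller than the local degrees. Feeding this back yields a self-consistent (fixed-point) inequality for $\overline\alpha_\lambda$; solving it and optimizing over $\lambda$ (taken to grow slowly) gives $\overline\alpha_\lambda\cdot t\,|S^{n-1}|=(1+o(1))\,\mu(\theta)^{-1}\,n\ln\frac{\sin\theta}{\sqrt2\sin(\theta/2)}$, which is the assertion. (Note $\ln\frac{\sin\theta}{\sqrt2\sin(\theta/2)}=\frac12\ln(1+\cos\theta)$, so the extremal constant is really the value of a one-parameter optimization.)

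The main obstacle is obtaining the \emph{sharp} constant $\ln\frac{\sin\theta}{\sqrt2\sin(\theta/2)}$ rather than merely some positive multiple of $n$: a crude application of locally-sparse independent-set bounds (the route of the first linear-in-$n$ improvements) loses a constant factor, and recovering the optimum forces one to use the precise recursive geometry of a cap — not only that codegrees are small, but the full joint law of the occupancies on $N(x)$ induced by that rescaled proximity structure — and to solve the resulting fixed-point equation exactly. This has to be carried out with uniform $o(1)$ control of the Laplace asymptotics for cap and cap-intersection (lens) measures, a careful choice of the Poisson intensity $t=t(n)$, and a justification that $\mathbb E[\alpha(G_t(\theta))]$ approaches the continuum optimum as $t\to\infty$; a harmless technicality is that two points of $\mathcal P$ lie at angular distance exactly $\theta$ with probability zero.
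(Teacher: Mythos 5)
Two points are worth separating.

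First, the paper you are reading does not prove this theorem; it is stated with the tag \verb|[\cite{2021arXiv211101255G}]| and imported as a black box from Fern\'andez, Kim, Liu and Pikhurko. There is therefore no ``paper's own proof'' to compare against here; any comparison has to be with that reference.

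Second, and more importantly, what you have written is a plan rather than a proof, and the gap is precisely the quantitative content of the theorem. You correctly set up the framework (geometric proximity graph on a Poisson process, $A_2(n,\sin(\theta/2))\ge\mathbb E[\alpha(G_t(\theta))]$, hard-core model, occupancy fraction), you correctly compute the Laplace asymptotics $\mu(\theta)^{-1}=(1+o(1))\sqrt{2\pi n}\cos\theta/\sin^{n-1}\theta$, and you correctly observe that the theorem's whole value is the explicit factor $n\ln\frac{\sin\theta}{\sqrt2\sin(\theta/2)}=\frac n2\ln(1+\cos\theta)$ on top of the trivial covering bound. But the step where you claim ``feeding this back yields a self-consistent fixed-point inequality for $\overline\alpha_\lambda$; solving it and optimizing over $\lambda$ gives'' exactly this constant is asserted, not derived, and your own closing paragraph flags it as ``the main obstacle.'' Producing that constant requires controlling the lens (cap-intersection) measure uniformly, choosing $t$ and $\lambda$ correctly, and actually solving the resulting extremal problem; none of that is carried out. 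Note also that the bare occupancy/hard-core analysis of Jenssen--Joos--Perkins does \emph{not} produce the constant $\frac12\ln(1+\cos\theta)$ --- the cited reference's improvement comes from a sharper treatment of the local cap geometry --- so you cannot simply appeal to ``the same kind of estimate'' recursing inside $N(x)$ without specifying what is done differently. As it stands, the argument has the shape of a proof but is missing its core computation.
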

For $1<p\leq2$, we have
$$
A_p(n, 1/2)\geq A_2(n, (1/2)^{p/2}).
$$
Let $\sin(\theta/2)=2^{-p/2}$. Then $\cos(\theta/2)=\sqrt{1-2^{-p}}, \sin\theta=2^{1-p/2}\sqrt{1-2^{-p}}$, and $\cos\theta=1-2^{1-p}$.
So
\begin{equation}\label{lingyige}
\begin{split}
A_p(n, 1/2)&\geq A_2(n, (1/2)^{p/2})\\
&=A_2(n, \sin(\theta/2))\\
&\geq(1+o(1))\ln\sqrt{2-2^{1-p}}\cdot n\cdot\frac{\sqrt{2\pi n}(1-2^{1-p})}{(2^{1-p/2}\sqrt{1-2^{-p}})^{n-1}}.
\end{split}
\end{equation}
After some numerical calculations, when $p\in(1.9948, 2]$, the lower bound in inequality (\ref{lingyige}) is better than that in  inequality (\ref{apxiajie}).
\section*{Acknowledgements}
The authors would like to thank Professor Hong Liu for helpful comments on the manuscript. 

\bibliographystyle{abbrv}
\bibliography{kissing_number_REF}
\end{document}